\newtheorem{theorem}{Theorem}
\newtheorem{lemma}{Lemma}
\title{On ${L}(2,1)$-labelings of some products of oriented cycles}
\author{
  Lucas Colucci$^{1,2}$\thanks{Research of the author was partially supported by the National Research, Development and Innovation, NKFIH grant K 116769.}\\
  \texttt{lucas.colucci.souza@gmail.com}
  \and
  Ervin Gy\H ori$^{1,2}$\thanks{Research of the author was partially supported by the National Research, Development and Innovation, NKFIH grants K 116769 and SNN 117879.}\\
  \texttt{gyori.ervin@renyi.hu}
}
\date{  $^1$Alfréd Rényi Institute of Mathematics, Hungarian Academy of Sciences, Re\'altanoda~u.~13--15, 1053 Budapest, Hungary\\%
        $^2$Central European University, Department of Mathematics and its Applications, N\'ador~u.~9, 1051 Budapest, Hungary\\[2ex]%
        \today}
\begin{document}

\maketitle

\begin{abstract}

   We refine two results of Jiang, Shao and Vesel on the $L(2,1)$-labeling number $\lambda$ of the Cartesian and the strong product of two oriented cycles. For the Cartesian product, we compute the exact value of $\lambda(\overrightarrow{C_m} \square \overrightarrow{C_n})$ for $m$, $n \geq 40$; in the case of strong product, we either compute the exact value or establish a gap of size one for $\lambda(\overrightarrow{C_m} \boxtimes \overrightarrow{C_n})$ for $m$, $n \geq 48$.

\end{abstract}

\section{Introduction}

A \emph{$L(p,q)$-labeling}, or \emph{$L(p,q)$-coloring}, of a graph $G$ is a function $f:V(G)\rightarrow\{0,\dots,k\}$ such that $|f(u)-f(v)|\geq p$, if $e=uv \in E(G)$; and $|f(u)-f(v)|\geq q$, if there is a path of length two in $G$ joining $u$ and $v$. To take into account the number of colors used, we say that $f$ is a $k$-$L(p,q)$-labeling of $G$ (note that, for historical reasons, the colorings are assumed to start with the label 0). The minimum value of $k$ such that  $G$ admits a $k$-$L(p,q)$-labeling is denoted by $\lambda_{p,q}(G)$, and it is called the \emph{$L(p,q)$-labeling number} of $G$. 

\medskip

The particular case of $L(p,q)$-labelings that attracted the most attention is $p = 2$ and $q = 1$, the $L(2,1)$-labeling. It was introduced by Yeh \cite{yeh1990labeling}, and it traces back to the frequency assignment problem of wireless networks introduced by Hale \cite{hale1980frequency}. In this case, we write $\lambda(G)$ instead of $\lambda_{2,1}(G)$ for short.

\medskip

The definitions above can be extended to oriented graphs (a directed graph whose underlying graph is simple), namely: if $G$ is an oriented graph, a $L(p,q)$-labeling of $G$ is a function $f:V(G)\rightarrow\{0,\dots,k\}$ such that $|f(u)-f(v)|\geq p$, if $uv \in E(G)$; and $|f(u)-f(v)|\geq q$, if there is a \emph{directed} path of length two in $G$ joining $u$ and $v$. The corresponding $L(p,q)$-labeling number is again denoted by $\lambda_{p,q}(G)$ (in some papers, the notation $\overrightarrow{\lambda}_{p,q}(G)$ is used instead). The $L(2,1)$-labelings of oriented graphs were first studied by Chang and Liaw \cite{chang20032}, and the $L(p,q)$-labeling problem has been extensively studied since then in both undirected and directed versions. We refer the interested reader to the excellent surveys of Calamoneri \cite{calamoneri2011h} and Yeh \cite{yeh2006survey}.

\medskip

In this paper, we study the $L(2,1)$-labeling number of the Cartesian and the strong product of two oriented cycles, improving results of Jiang, Shao and Vesel \cite{shao20182}. We use the notation $\overrightarrow{C_n}$ to represent the oriented cycle on $n$ vertices, i.e., the digraph such that $V(\overrightarrow{C_n}) = \{1,2,\dots,n\}$ and $E(\overrightarrow{C_n})=\{(1,2),(2,3),\dots,(n-1,n),(n,1)\}$, $n \geq 3$. In the case of Cartesian product, we compute the exact value of $\lambda(\overrightarrow{C_m} \square \overrightarrow{C_n})$ for $m$, $n \geq 40$; in the case of strong product, we either compute the exact value or establish a gap of size one for $\lambda(\overrightarrow{C_m} \boxtimes \overrightarrow{C_n})$ for $m$, $n \geq 48$.

\section{Cartesian product}

The \emph{Cartesian product} of two graphs (resp. digraphs) $G$ and $H$ is the graph (resp. digraph) $G \square H$ such that $V(G \square H) = V(G) \times V(H)$, and where there is an edge joining $(a,x)$ and $(b,y)$ if $ab \in E(G)$ and $x=y$, or if $a=b$ and $xy \in E(H)$  (resp. there is an edge pointing from $(a,x)$ to $(b,y)$ if $ab \in E(G)$ and $x=y$, or if $a=b$ and $xy \in E(H)$).

\begin{figure}[H]
    \centering
    
    \includegraphics[width=8cm, height=6cm]{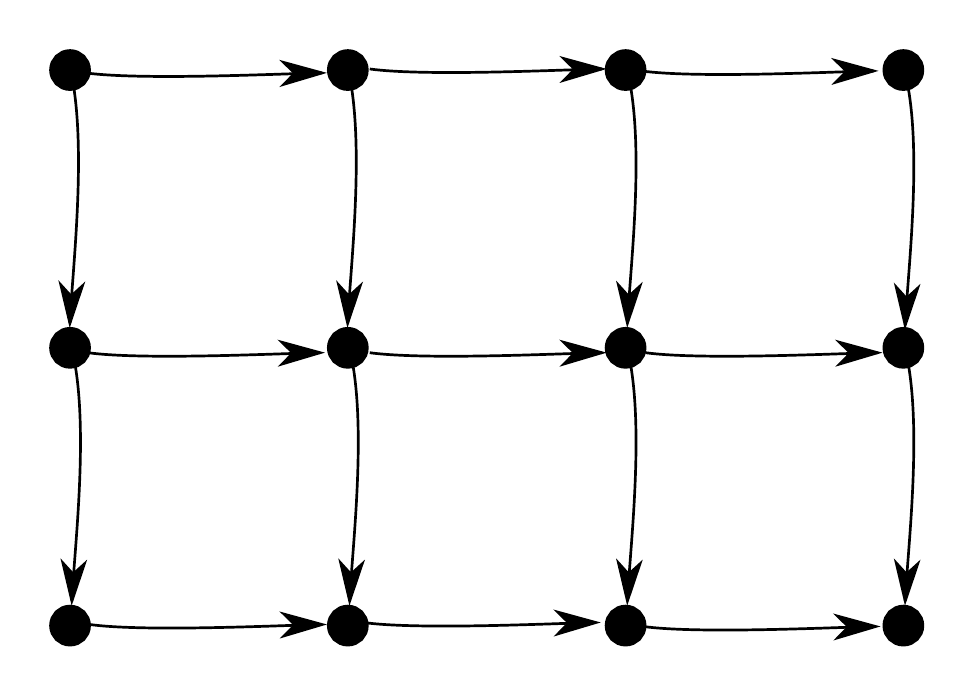}

    \caption{The Cartesian product of $\protect\overrightarrow{P_3}$ and $\protect\overrightarrow{P_4}$}
    
    \label{cart}
    
\end{figure}

Let $S(m,n) = \{am+bn: \text{$a$, $b \geq 0$ integers not both zero}\}$. A classical result of Sylvester \cite{sylvester1884mathematical} states that $t \in S(m,n)$ for all integers $t \geq (m-1)(n-1)$ that are divisible by $\gcd(m,n)$, the greatest common divisor of $m$ and $n$.

\medskip

In \cite{shao20182}, Jiang, Shao and Vesel prove the following theorem:

\begin{theorem}[\cite{shao20182}]
For all $m$, $n \in S(5,11)$, $4 \leq \lambda(\overrightarrow{C_m} \square \overrightarrow{C_n}) \leq 5$. In particular, the result holds for every $m$, $n \geq 40$.
\end{theorem}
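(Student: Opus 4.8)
I would first reduce to a single oriented cycle. Fix any row of the product, i.e.\ the vertex set $\{(1,j),(2,j),\dots,(m,j)\}$ for a fixed $j$. The product arcs inside this row are exactly $(i,j)\to(i+1,j)$, and a directed path of length two inside the row joins $(i,j)$ to $(i+2,j)$; hence the restriction of any $L(2,1)$-labeling of $\overrightarrow{C_m}\square\overrightarrow{C_n}$ to this row is an $L(2,1)$-labeling of $\overrightarrow{C_m}$. Thus $\lambda(\overrightarrow{C_m}\square\overrightarrow{C_n})\ge\lambda(\overrightarrow{C_m})$, and it suffices to show $\lambda(\overrightarrow{C_m})\ge 4$, i.e.\ that no cyclic sequence $x_1,\dots,x_m\in\{0,1,2,3\}$ with $|x_i-x_{i+1}|\ge 2$ and $x_i\neq x_{i+2}$ exists. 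The key observation is that within $\{0,1,2,3\}$ the label $2$ can only be adjacent to $0$ and the label $1$ only to $3$; a $2$ would therefore be flanked as $0,2,0$ and a $1$ as $3,1,3$, both violating $x_{i-1}\neq x_{i+1}$. Hence only the labels $0$ and $3$ may appear, forcing the alternation $0,3,0,3,\dots$, which again violates the distance-two condition. This contradiction gives $\lambda(\overrightarrow{C_m})\ge 4$ for every $m$, and the lower bound follows.

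\textbf{Upper bound, base pattern.} For the upper bound I would produce an explicit labeling. The natural candidate is the diagonal pattern $\ell(i,j)=2(i+j)\bmod 5\in\{0,1,2,3,4\}$. Along every row and every column this is the periodic sequence $0,2,4,1,3$, whose consecutive absolute differences are $2,2,3,2,3\ge 2$ and whose terms two apart are distinct; the forward diagonal neighbour $(i+1,j+1)$ shifts the value by $4\equiv-1\pmod 5$, a fixed-point-free permutation of the five labels, so all directed distance-two pairs receive distinct labels. Crucially, the only distance-two neighbours of $(i,j)$ along a diagonal are $(i\pm1,j\pm1)$ and \emph{not} the anti-diagonal vertices $(i+1,j-1)$ (no directed path of length two joins them), which is exactly why this pattern---illegal in the undirected setting---is valid here. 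Since the pattern has period $5$ in both coordinates, it descends to $\overrightarrow{C_m}\square\overrightarrow{C_n}$ whenever $5\mid m$ and $5\mid n$, already showing $\lambda\le 4$ in that case.

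\textbf{Upper bound, general case.} For arbitrary $m,n\in S(5,11)$ I would invoke Sylvester's theorem to write $m=5a+11b$ and $n=5c+11d$ with $a,b,c,d\ge 0$, and assemble the labeling from blocks: arcs of length $5$ carry the base pattern and abut seamlessly, while the arcs of length $11$ (where $11\equiv1\pmod 5$ prevents the period-$5$ pattern from closing) are labeled by a corrected block that spends the sixth label $5$ near the seam to absorb the unit shift. The plan is to design one such width-$11$ block, together with its companion in the $n$-direction, so that across every seam the horizontal ($\ge 2$), vertical ($\ge 2$), and diagonal (distinctness) constraints hold, after which the whole torus is tiled by translated copies. \textbf{The main obstacle is precisely this seam analysis:} verifying that the correction block meshes with both a neighbouring width-$5$ block and another width-$11$ block along all three constraint types, in both coordinate directions simultaneously, is where the sixth label becomes necessary and where the bulk of the (routine but delicate) case checking lies.
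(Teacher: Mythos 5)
Your lower bound is correct and complete: restricting a labeling to a row of the product gives an $L(2,1)$-labeling of $\overrightarrow{C_m}$, and your observation that within $\{0,1,2,3\}$ the label $2$ (resp.\ $1$) can only neighbour $0$ (resp.\ $3$) correctly rules out any $3$-labeling of an oriented cycle. Your period-5 diagonal pattern is also valid, for exactly the reason you give (the anti-diagonal pairs are unconstrained in the oriented product). But the real content of the theorem is the upper bound $\lambda \le 5$ for \emph{all} $m,n \in S(5,11)$, and there you have only a plan: the width-$11$ correction block is never written down, and the seam analysis --- which you yourself flag as ``the main obstacle'' --- is never carried out. As it stands, the proposal establishes the theorem only when $5 \mid m$ and $5 \mid n$.

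Two remarks on how the gap is actually closed. First, you do not need a bespoke block for $\overrightarrow{C_{11}} \square \overrightarrow{C_{11}}$: the diagonal extension $f(i,j) = c(i+j \bmod d)$ of \emph{any} $4$-$L(2,1)$-labeling $c$ of $\overrightarrow{C_d}$ (e.g.\ the blocks $024\,024\,024\,13$ for $d=11$) is a valid $4$-labeling of $\overrightarrow{C_d} \square \overrightarrow{C_d}$; this is precisely the trick used in the proof of Theorem \ref{large}. The irreducible missing case is the mixed product $\overrightarrow{C_5} \square \overrightarrow{C_{11}}$, where no diagonal pattern can descend (since $\gcd(5,11)=1$) and an explicit $5$-labeling must be exhibited --- this is where the sixth label genuinely enters. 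Second, your ``tile the torus by translated copies'' step is not automatic in both coordinate directions simultaneously; controlling the cross-seams is exactly the role of Lemma \ref{multiple} (Lemmas 2 and 3 of \cite{shao20182}): from $k$-labelings of $\overrightarrow{C_m} \square \overrightarrow{C_n}$, $\overrightarrow{C_m} \square \overrightarrow{C_m}$ and $\overrightarrow{C_n} \square \overrightarrow{C_n}$ one gets $k$-labelings for all $a,b \in S(m,n)$. The established route --- the one of \cite{shao20182}, which the present paper cites rather than reproves --- is therefore: explicit labelings of the three base products $\overrightarrow{C_5} \square \overrightarrow{C_5}$, $\overrightarrow{C_{11}} \square \overrightarrow{C_{11}}$, $\overrightarrow{C_5} \square \overrightarrow{C_{11}}$, then the composition lemma, then Sylvester's theorem (with $(5-1)(11-1)=40$) to cover all $m,n \ge 40$. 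Your proposal would need to supply both the mixed-case labeling and a proof of the composition step to be complete.
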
{}

Our result in this section determines the exact value of $\lambda$ in the range above. We start with a lemma which is a slightly stronger version of Lemma 5 from \cite{shao20182} that can be obtained with the same proof, which we include here for the sake of completeness.

\begin{lemma}\label{diagonalcart}
For every $m$, $n \geq 3$ and every 4-$L(2,1)$-labeling $f$ of $\overrightarrow{C_m} \square \overrightarrow{C_n}$, the following periodicity condition holds:

\begin{equation}\label{4period}
    f(i,j)=f(i+1 \text{ mod $m$},j-1 \text { mod $n$}) \text{ for all } i \in [m], j \in [n].
\end{equation}
\end{lemma}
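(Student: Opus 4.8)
The plan is to reduce the statement to a purely local claim about $2\times 2$ blocks and then dispose of a short list of exceptional configurations. After a reindexing, the asserted identity $f(i,j)=f(i+1,j-1)$ is equivalent to $f(i,j+1)=f(i+1,j)$ for all $i,j$; that is, in every $2\times 2$ block I must show that the two ``anti-diagonal'' entries coincide. So I would fix a block and abbreviate $a=f(i,j)$, $c=f(i,j+1)$, $b=f(i+1,j)$, $d=f(i+1,j+1)$. Since in $\overrightarrow{C_m}\square\overrightarrow{C_n}$ every vertex points to the vertex below it and to the vertex to its right, the four edges of the block give $|a-b|,|a-c|,|b-d|,|c-d|\ge 2$, while the two length-two directed paths from $(i,j)$ to $(i+1,j+1)$ give $|a-d|\ge 1$. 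The goal is to prove $b=c$.

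First I would observe that both $b$ and $c$ lie in the set $T(a,d)=\{x\in\{0,\dots,4\}:|x-a|\ge 2\text{ and }|x-d|\ge 2\}$, which depends only on the corner values $a\ne d$. A direct enumeration of the ten unordered pairs $\{a,d\}$ shows that $T(a,d)$ has at most one element except when $\{a,d\}=\{0,1\}$ (where $T=\{3,4\}$) or $\{a,d\}=\{3,4\}$ (where $T=\{0,1\}$). Hence in every non-exceptional block $b$ and $c$ are forced to be equal, and only these two patterns remain to be handled.

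The hard part is precisely these exceptional blocks, where $T$ has two elements and equality is not forced inside the block; here I would propagate constraints into the neighbouring block to the right. Using the label-complement symmetry $x\mapsto 4-x$, the transpose symmetry $b\leftrightarrow c$ coming from $\overrightarrow{C_m}\square\overrightarrow{C_n}\cong\overrightarrow{C_n}\square\overrightarrow{C_m}$, and the arc-reversal ($180^\circ$) symmetry $(a,c,b,d)\mapsto(d,b,c,a)$, all exceptional cases reduce to the single representative $a=0$, $d=1$, $b=4$, $c=3$. Assuming $b\ne c$ there, I would force the two values in column $j+2$: the length-two path $(i,j)\to(i,j+1)\to(i,j+2)$ gives $f(i,j+2)\ne 0$, and together with $|c-f(i,j+2)|\ge 2$ this forces $f(i,j+2)=1$; then $|d-f(i+1,j+2)|\ge 2$ forces $f(i+1,j+2)\in\{3,4\}$, while the length-two path $(i,j+1)\to(i+1,j+2)$ gives $f(i,j+1)\ne f(i+1,j+2)$, excluding $3$, so $f(i+1,j+2)=4$. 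But now $f(i+1,j)=4=f(i+1,j+2)$, contradicting the length-two constraint $|f(i+1,j)-f(i+1,j+2)|\ge 1$ along row $i+1$. This rules out $b\ne c$ in the exceptional blocks as well, so $b=c$ holds in every block and the periodicity condition \eqref{4period} follows.

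I expect the only delicate point to be the bookkeeping of the three symmetries: I must verify that each genuinely preserves both the digraph $\overrightarrow{C_m}\square\overrightarrow{C_n}$ and the $L(2,1)$ constraints, so that the contradiction obtained for the representative configuration transfers to all exceptional blocks. The forcing computation itself is short and self-contained.
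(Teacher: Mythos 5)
Your proposal is correct and follows essentially the same route as the paper's proof: both reduce the lemma to a local analysis of a $2\times 2$ block, show that the two anti-diagonal entries can differ only when one diagonal of the block carries $\{0,1\}$ and the other $\{3,4\}$, and then eliminate this exceptional configuration by forcing the colors of a couple of adjacent vertices until some vertex has no available color. The differences are only organizational: the paper pins down the exceptional configuration by observing that all four colors must be distinct with color $2$ unused and, using only the complement symmetry $x\mapsto 4-x$, derives the contradiction at the diagonal vertex $(1,1)$ or $(3,3)$, whereas you enumerate the sets $T(a,d)$, invoke the transpose and arc-reversal symmetries to reach a single representative, and propagate into the next column.
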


\begin{proof}
Let $f$ be a $4$-$L(2,1)$-labeling of $G=\overrightarrow{C_m} \square \overrightarrow{C_n}$. We write $f(i,j)$ for the value of $f$ at the vertex $(i,j)$.
By the symmetry of the graph, it is enough to prove that $f(2,2)=f(1,3)$. Suppose, for the sake of contradiction, that this is not the case.

Consider the $\overrightarrow{P_2} \square \overrightarrow{P_2}$ subgraph spanned by $\{(1,2),(1,3),(2,2),(2,3)\}$. Every vertex is at distance at most 2 from each other, expect for the pair $(2,2), (1,3)$. This implies, together with our assumption that $f(2,2) \neq f(1,3)$, that every vertex of this subgraph gets a distinct color. It is clear that the color 2 cannot be used in any vertex $v$ of this subgraph, since otherwise the two neighbours of $v$ must receive colors 0 and 4, and there is no color left for the fourth vertex. Thus, the colors used on the vertices of this subgraph are exactly 0, 1, 3 and 4, in some order.

Using the fact that $4-f$ is a $4$-$L(2,1)$-labeling of a graph whenever $f$ is, we may assume without loss of generality that $f(2,2) \in \{0,1\}$. This implies that $f(1,3) \in \{0,1\}$ and $\{f(1,2),f(2,3)\} = \{3,4\}$. If $f(2,3)=3$, there is no color for $(3,3)$, since its within distance two from $(2,2)$ and $(1,3)$, colored with 0 and 1, and it is neighbor of a vertex of color 3. If $f(1,2)=3$, the same argument applies for the vertex $(1,1)$.
\end{proof}

We call labelings with the property of Lemma \ref{diagonalcart} \emph{diagonal}.

\medskip

The following lemma from \cite{shao20182} combined with the result of Sylvester will also help us:

\begin{lemma}\label{multiple}(Lemmas 2 and 3 in \cite{shao20182})
Let $m$, $n$, $p \geq 3$ and $t$, $k \geq 1$ be integers. If $\lambda(\overrightarrow{C_m} \square \overrightarrow{C_n}) \leq k$ and $\lambda(\overrightarrow{C_p} \square \overrightarrow{C_n}) \leq k$, then $\lambda(\overrightarrow{C_{m}}_{+tp} \square \overrightarrow{C_n}) \leq k$.

In particular, if $m$ and $n$ are such that $\lambda(\overrightarrow{C_m} \square \overrightarrow{C_n}) \leq k$, $\lambda(\overrightarrow{C_m} \square \overrightarrow{C_m}) \leq k$ and $\lambda(\overrightarrow{C_n} \square \overrightarrow{C_n}) \leq k$, then $\lambda(\overrightarrow{C_a} \square \overrightarrow{C_b}) \leq k$ for all $a$, $b \in S(m,n)$, and hence for all $a$, $b \geq (m-1)(n-1)$ divisible by $\gcd(m,n)$.
\end{lemma}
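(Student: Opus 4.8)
The plan is to prove the first assertion by a gluing (concatenation) construction along the first coordinate, and then to derive the ``in particular'' part by iterating it in both coordinates and invoking Sylvester's theorem.

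First I would reformulate a labeling in a way that makes concatenation transparent. Given a $k$-$L(2,1)$-labeling $f$ of $\overrightarrow{C_M} \square \overrightarrow{C_n}$, I record for each $i \in \{1,\dots,M\}$ the \emph{column} $c_i$ defined by $c_i(j) = f(i,j)$. Since every edge and every directed path of length two in $\overrightarrow{C_M}\square\overrightarrow{C_n}$ links vertices whose first coordinates differ by at most $2$, the validity of $f$ is equivalent to a purely local condition on consecutive columns: each $c_i$ must be an admissible column (the within-$\overrightarrow{C_n}$ constraints), each consecutive pair $(c_i,c_{i+1})$ must satisfy the edge and diagonal-path constraints between adjacent columns, and each pair $(c_i,c_{i+2})$ must satisfy the distance-two constraint. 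I would encode this by an auxiliary digraph $D=D_{n,k}$ whose vertices are the admissible ordered pairs $(c,c')$ of columns and whose arcs go from $(c,c')$ to $(c',c'')$ exactly when adjoining $c''$ respects all the constraints just listed; then valid $k$-labelings of $\overrightarrow{C_M}\square\overrightarrow{C_n}$ correspond precisely to closed walks of length $M$ in $D$.

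In this language the hypotheses give a closed walk $W$ of length $m$ and a closed walk $U$ of length $p$ in the \emph{same} digraph $D$. If $W$ and $U$ share a vertex $v$, the construction is immediate: follow $W$ once from $v$ back to $v$ (length $m$), then traverse $U$ exactly $t$ times (length $tp$), obtaining a closed walk of length $m+tp$ and hence a valid labeling of $\overrightarrow{C_{m+tp}}\square\overrightarrow{C_n}$. This is precisely why both hypotheses and the multiplier $t$ appear: the $p$-cycle is spliced in as a detour that can be repeated. The main obstacle is therefore to guarantee a common vertex, and this is the step I expect to require the most care. To arrange it I would exploit the symmetries shared by all three products: the rotation $j \mapsto j+1$ of the $\overrightarrow{C_n}$-coordinate and the complementation $x \mapsto k-x$ both send valid labelings to valid labelings and act on $D$, so I am free to replace $U$ by any of its images under this symmetry group before splicing. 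I would combine this freedom with the rigidity of the relevant labelings -- in the cases where we apply the lemma the labelings are diagonal in the sense of Lemma~\ref{diagonalcart}, hence each is determined by a single periodic profile and its admissible column pairs take a highly restricted form -- to force a column pair of (a symmetric image of) $U$ to coincide with one of $W$.

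Finally, I would deduce the ``in particular'' statement by induction. Using the two base cases $\lambda(\overrightarrow{C_m}\square\overrightarrow{C_n})\le k$ and $\lambda(\overrightarrow{C_n}\square\overrightarrow{C_n})\le k$ simultaneously as starting values and as increments (applying the first assertion with $p=m$ and with $p=n$), repeated application lets me add arbitrary nonnegative multiples of $m$ and of $n$ to the first coordinate, so $\lambda(\overrightarrow{C_a}\square\overrightarrow{C_n})\le k$ for every $a\in S(m,n)$; the analogous argument with base cases $\lambda(\overrightarrow{C_m}\square\overrightarrow{C_m})\le k$ and $\lambda(\overrightarrow{C_n}\square\overrightarrow{C_m})\le k$ (the latter from symmetry of $\square$) gives $\lambda(\overrightarrow{C_a}\square\overrightarrow{C_m})\le k$ as well. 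Since the Cartesian product is symmetric, I then fix $a\in S(m,n)$ and repeat the growth in the second coordinate, starting from $\lambda(\overrightarrow{C_a}\square\overrightarrow{C_m})\le k$ and $\lambda(\overrightarrow{C_a}\square\overrightarrow{C_n})\le k$, to reach every $b\in S(m,n)$. This yields $\lambda(\overrightarrow{C_a}\square\overrightarrow{C_b})\le k$ for all $a,b\in S(m,n)$, and the theorem of Sylvester quoted above upgrades this to all $a,b\ge (m-1)(n-1)$ divisible by $\gcd(m,n)$.
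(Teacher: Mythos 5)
You should know at the outset that the paper never proves Lemma \ref{multiple} --- it is quoted from \cite{shao20182} without proof --- so your attempt can only be judged on its own terms. Your transfer-digraph encoding is the right formalization (vertices are admissible ordered pairs of consecutive columns, arcs are admissible triples, and $k$-$L(2,1)$-labelings of $\overrightarrow{C_M} \square \overrightarrow{C_n}$ correspond to closed walks of length $M$), and your reduction is correct: if the walk $W$ of length $m$ and the walk $U$ of length $p$ share a vertex, splicing yields closed walks of length $m+tp$. The gap is that you never establish the shared vertex, and this is not a technicality that symmetries plus ``rigidity'' can supply later: it is the entire content of the lemma, and in the generality in which the lemma is stated it is simply not available, because the bare statement (two arbitrary labelings, no compatibility hypothesis between them) is false.

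Concretely, take $n=12$, $m=4$, $p=3$, $t=1$, $k=4$. The diagonal labelings $f(i,j)=c(i+j \bmod 4)$ with $c=(0,3,1,4)$ and $g(i,j)=c'(i+j \bmod 3)$ with $c'=(0,2,4)$ (the paper's own block patterns extended diagonally) are valid $4$-$L(2,1)$-labelings, so $\lambda(\overrightarrow{C_4} \square \overrightarrow{C_{12}}) \leq 4$ and $\lambda(\overrightarrow{C_3} \square \overrightarrow{C_{12}}) \leq 4$; yet $\lambda(\overrightarrow{C_7} \square \overrightarrow{C_{12}}) \geq 5$, since by Lemma \ref{diagonalcart} any $4$-labeling of $\overrightarrow{C_7} \square \overrightarrow{C_{12}}$ would be constant on the orbits of $(i,j) \mapsto (i+1,j-1)$, and $\gcd(7,12)=1$ means there is a single orbit, forcing a constant labeling, which violates the edge condition. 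In your language: both closed walks exist in $D_{12,4}$, but they share no vertex and cannot be made to --- the columns of $f$ have period $4$ and those of $g$ have period $3$, so no rotation or complementation makes a column pair of one coincide with a column pair of the other (a common column would have period $\gcd(3,4)=1$, i.e., be constant) --- and indeed no closed walk of length $7$ exists at all. So the provable version of the first assertion needs an extra hypothesis, namely that the two labelings share two consecutive columns (exactly your ``common vertex''); that hypothesis is automatic in this paper's application, where the lemma is invoked with $m=n=p=d$ and a single diagonal labeling is spliced with itself. Your fallback remark that ``in the cases where we apply the lemma the labelings are diagonal'' concedes precisely this point: it retreats to a weaker, hypothesis-laden statement rather than the lemma as posed, and even that argument is never carried out. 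Your derivation of the ``in particular'' part from the first assertion is sound as pure logic (and that part of the statement is in fact true for $k=4$, since every element of $S(m,n)$ is divisible by $\gcd(m,n)\geq 3$ and can be labeled diagonally --- which is the argument one should make directly), but as written it rests on a first assertion that is false in general.
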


\begin{theorem}\label{large}
Let $m, n \geq 40$. Then:

$$
{\lambda}(\overrightarrow{C_m} \square \overrightarrow{C_n}) = 
\begin{cases}
4, \text{ if } \gcd(m,n) \geq 3;\\
5, \text{ otherwise. }\\
\end{cases}
$$
\end{theorem}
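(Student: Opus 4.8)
The plan is to leverage the bounds $4 \le \lambda(\overrightarrow{C_m}\square\overrightarrow{C_n}) \le 5$ already furnished by the theorem of Jiang, Shao and Vesel quoted above (valid for $m,n\ge 40$), so that the whole problem reduces to deciding \emph{whether a $4$-$L(2,1)$-labeling exists}. By Lemma~\ref{diagonalcart}, any such labeling is diagonal, i.e.\ constant on the orbits of the map $\sigma\colon(i,j)\mapsto(i+1 \bmod m,\, j-1\bmod n)$. First I would analyse these orbits: each orbit of $\sigma$ has size $\operatorname{lcm}(m,n)$, so the $mn$ vertices split into exactly $d:=\gcd(m,n)$ orbits; since the quantity $(i+j)\bmod d$ is preserved by $\sigma$ and takes $d$ distinct values, it labels the orbits. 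I will call these orbits \emph{diagonals} and write $g(\ell)$ for the common value of a diagonal $4$-labeling on the diagonal $\{(i,j):i+j\equiv \ell \pmod d\}$.

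The next step is to translate the $L(2,1)$ conditions into conditions on $g\colon \mathbb{Z}_d\to\{0,1,2,3,4\}$. Each edge of the product joins $(i,j)$ to $(i+1,j)$ or $(i,j+1)$, both of which lie on the diagonal $\ell+1$; each directed path of length two joins $(i,j)$ to one of $(i+2,j),(i+1,j+1),(i,j+2)$, all on the diagonal $\ell+2$. Hence a diagonal $4$-labeling is valid if and only if $g$ satisfies $|g(\ell)-g(\ell+1)|\ge 2$ and $g(\ell)\ne g(\ell+2)$ for all $\ell\in\mathbb{Z}_d$ --- precisely the requirement that $g$ be a $4$-$L(2,1)$-labeling of the \emph{undirected} cycle $C_d$. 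Thus a $4$-labeling of the product exists if and only if $\lambda(C_d)\le 4$.

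For $d\ge 3$ I would finish by exhibiting such a $g$ explicitly: the periodic pattern $0,2,4$ settles the case $3\mid d$, and short patterns such as $0,3,1,4$ and $0,2,4,1,3$ (together with concatenations obtained by inserting copies of $0,2,4$) cover the remaining residues; equivalently, one invokes the classical fact that $\lambda(C_d)=4$ for every $d\ge 3$. Lifting $g$ through $(i,j)\mapsto g\big((i+j)\bmod d\big)$ then yields a genuine $4$-labeling (no same-diagonal clashes occur because for $d\ge 3$ neither $1$ nor $2$ is $\equiv 0 \pmod d$), so $\lambda=4$ in this range. I note that this construction does not appeal to Lemma~\ref{multiple} or Sylvester's theorem, although those tools give an alternative route via finitely many base products.

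The heart of the argument, and the step I expect to be the main obstacle, is the converse direction for $d\le 2$, which must produce the lower bound $\lambda\ge 5$. Here the diagonal constraint backfires. If $d=1$ there is a single diagonal, so every pair of adjacent vertices receives equal colors, contradicting $|g(\ell)-g(\ell+1)|\ge 2$. If $d=2$, adjacency (index shift $1$) is harmless, but a directed path of length two shifts the index by $2\equiv 0\pmod 2$, so $(i,j)$ and $(i+2,j)$ lie on the \emph{same} diagonal while being joined by a directed $2$-path; since $m\ge 3$ they are distinct vertices forced to share a color, violating the requirement that vertices joined by a directed path of length two receive distinct colors. In either case no $4$-labeling exists, so by the cited upper bound $\lambda=5$. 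I would take care to confirm that the relevant adjacent and distance-two vertex pairs are genuinely present (which holds as $m,n\ge 3$), so that the obstruction is real and not merely a feature of the index arithmetic; this verification is the only delicate point.
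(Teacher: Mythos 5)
Your proof is correct, and although it hinges on the same key lemma as the paper (Lemma~\ref{diagonalcart}), it exploits diagonality along a genuinely different route. The paper splits the work between two pieces of machinery: for the upper bound it invokes Lemma~\ref{multiple} and Sylvester's theorem to reduce to $\overrightarrow{C_d}\square\overrightarrow{C_d}$ with $d=\gcd(m,n)$, and only there lifts a $4$-labeling of the oriented cycle $\overrightarrow{C_d}$; for the lower bound it runs a Euclidean-style descent, verifying that a diagonal $4$-labeling of $\overrightarrow{C_m}\square\overrightarrow{C_n}$ restricts to one of $\overrightarrow{C_{m-n}}\square\overrightarrow{C_n}$ and iterating until it reaches $\overrightarrow{C_{k+1}}\square\overrightarrow{C_k}$ or $\overrightarrow{C_{k+2}}\square\overrightarrow{C_k}$, where diagonality forces two conflicting vertices to share a color. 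You replace both pieces by a single quotient analysis on the original product: diagonal labelings are exactly the functions $g$ of $(i+j)\bmod d$ (your orbit count $mn/\operatorname{lcm}(m,n)=d$ together with the invariance of $(i+j)\bmod d$ is a complete justification of this), edges shift that index by $1$ and directed $2$-paths by $2$, so a $4$-labeling of the product exists if and only if $g$ satisfies $|g(\ell)-g(\ell+1)|\ge 2$ and $g(\ell)\ne g(\ell+2)$ on $\mathbb{Z}_d$. For $d\ge 3$ your patterns ($024$, $0314$, $02413$, padded with $024$-blocks) are essentially the ones the paper uses for $\overrightarrow{C_d}$, but lifted directly to $\overrightarrow{C_m}\square\overrightarrow{C_n}$, so Lemma~\ref{multiple} and Sylvester's theorem are never needed; for $d\in\{1,2\}$ the conditions on $g$ are unsatisfiable because $1\equiv 0$ or $2\equiv 0\pmod d$, giving $\lambda\ge 5$ in one step, with the matching upper bound supplied (in both your argument and the paper's) by the cited theorem of Jiang, Shao and Vesel. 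What your route buys is economy and transparency: the dichotomy at $\gcd(m,n)=3$ is exposed as precisely the question of whether the shifts $1$ and $2$ are nonzero modulo $d$, and the somewhat fiddly check that restriction to a subcylinder remains a valid labeling disappears entirely. What the paper's descent buys is reusability: the identical restriction argument is recycled in the proof of Theorem~\ref{strong} for the strong product, where the corresponding quotient conditions (the $L(2,2,1,1)$-condition on $C_d$) are harder to dispatch directly. Your stated cautions (distinctness of the relevant vertex pairs, which holds since $m,n\ge 3$) are exactly the right ones and are easily discharged.
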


\begin{proof}

\medskip

For $m, n \geq 3$, let $G$ denote the graph $\overrightarrow{C_m} \square \overrightarrow{C_n}$, i.e., $V(G) = [m] \times [n]$ and the directed edges of $G$ point from $(i,j)$ to $(i+1 \text{ mod $m$}, j)$ and to $(i,j+1 \text{ mod $n$})$, for every $i \in [m], j \in [n]$. For a labeling $f$, we write $f(i,j)$ instead of $f((i,j))$ for short.

\medskip

Let $d = \gcd(m,n)$ and assume first that $d \notin \{1, 2\}$. According to Lemma \ref{multiple}, it is enough to prove that $\lambda(\overrightarrow{C_d} \square \overrightarrow{C_d}) = 4$. Any 4-$L(2,1)$-labeling $f$ of $\overrightarrow{C_d}$ can be extended to a 4-$L(2,1)$-labeling $f'$ of $\lambda(\overrightarrow{C}_d \square \overrightarrow{C_d})$ by setting $f'(i,j) = f(i+j \text{ mod $d$})$. It suffices to show, then, that $\lambda(\overrightarrow{C_d}) = 4$.

\medskip

If $d \equiv 0$ (mod 3), then we can label $\overrightarrow{C_d}$ with $d/3$ blocks 024. If $d \equiv 1$ (mod 3), we label $\overrightarrow{C_d}$ with $(d-4)/3$  consecutive blocks 024 and then one block 0314. Finally, if $d \equiv 2$ (mod 3), then we label $\overrightarrow{C_d}$ with $(d-2)/3$ consecutive blocks 024 and then a block 13.

\bigskip 

On the other hand, assume for the sake of contradiction that $d \in \{1, 2\}$ and there is a 4-$L(2,1)$-labeling $f$ of $\overrightarrow{C_m} \square \overrightarrow{C_n}$. In particular, $m \neq n$, so let us assume that $m > n$. 

\medskip

It is easy to check that, if $m \geq n+3$, $f$ induces a valid 4-$L(2,1)$-labeling of $\overrightarrow{C_m}_{-n} \square \overrightarrow{C_n}$. In fact,  let $g(i,j) = f(i,j)$ for all $1 \leq i \leq m-n$ and $1 \leq j \leq n$. We claim that $g$ is a 4-$L(2,1)$-labeling of $\overrightarrow{C_m}_{-n} \square \overrightarrow{C_n}$, which, in particular, satisfies (\ref{4period}) as well.

\medskip

Indeed, all we have to check is that the following conditions hold for $g$, since the other restrictions are inherited by $f$: $|g(m-n-1,j)-g(1,j)| \geq 1$, $|g(m-n,j)-g(1,j)| \geq 2$, $|g(m-n,j)-g(2,j)| \geq 1$, $|g(m-n,j)-g(1,j+1 \text{ mod $n$})| \geq 2$, for every $j \in [n]$. All these conditions follow from $g(m-n-1,j) = f(m-n-1, j) = f(m-1, j+n \text{ mod $n$}) = f(m-1, j)$ and $g(m-n,j) = f(m-n, j) = f(m, j+n \text{ mod $n$}) = f(m, j)$, which result from the application of (\ref{4period}) $n$ times, together with the fact that $f$ is a $L(2,1)$-labeling of $\overrightarrow{C_m} \square \overrightarrow{C_n}$.

\medskip

Applying this argument consecutively, using the fact that $d = \gcd(m,n)$ and by the symmetry of the factors of the product, we conclude that $f$ induces a 4-$L(2,1)$-labeling $c$ of either $\overrightarrow{C_k}_{+1} \square \overrightarrow{C_k}$ or $\overrightarrow{C_k}_{+2} \square \overrightarrow{C_k}$, for some $k \geq 3$. This is a contradiction, since in this case we would have, by Lemma \ref{diagonalcart}, $c(1,1) = c(2,k) = \dots = c(k+1,1)$ and $(k+1,1)$ and $(1,1)$ are joined by and edge or by a directed path of length two, respectively. 
\medskip

\end{proof}

\section{Strong product}

The \emph{strong product} of two graphs (resp. digraphs) $G$ and $H$ is the graph (resp. digraph) $G \boxtimes H$ such that $V(G \boxtimes H) = V(G) \times V(H)$, and where there is an edge joining $(a,x)$ and $(b,y)$ if either $ab \in E(G)$ and $x=y$, or if $a=b$ and $xy \in E(H)$, or if $ab \in E(G)$ and $xy \in E(H)$.  (resp. either there is an edge pointing from $(a,x)$ to $(b,y)$ if $ab \in E(G)$ and $x=y$, or if $a=b$ and $xy \in E(H)$, or $ab \in E(G)$ and $xy \in E(H)$).

\begin{figure}[H]
    \centering
    
    \includegraphics[width=8cm, height=6cm]{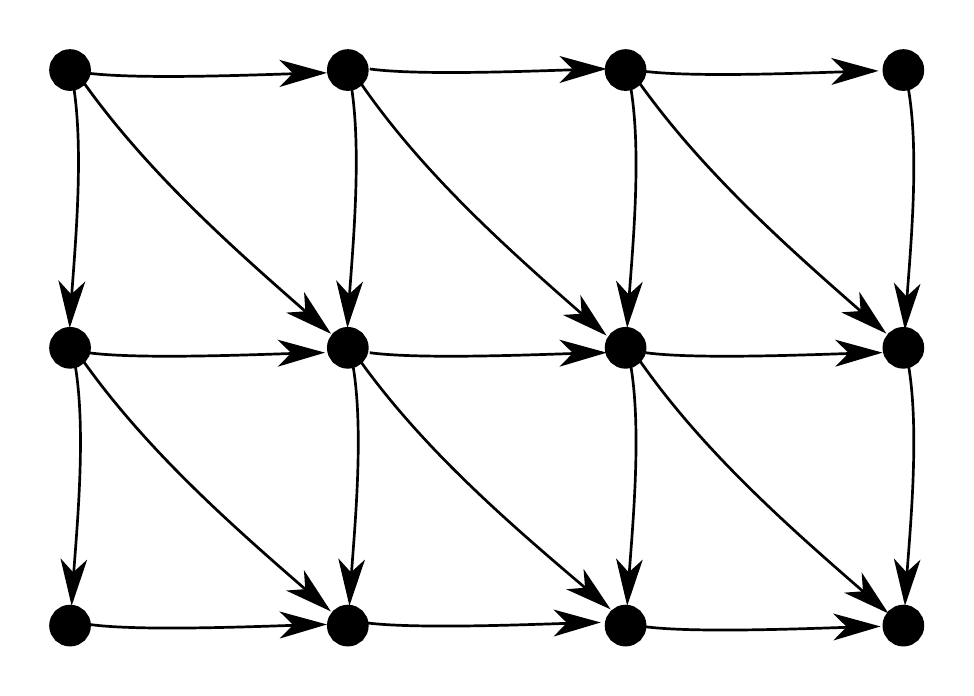}

    \caption{The strong product of $\protect\overrightarrow{P_3}$ and $\protect\overrightarrow{P_4}$}
    
    \label{strong}
    
\end{figure}

In the same paper, Jiang, Shao and Vesel prove the following result for the strong product of two directed cycles:

\begin{theorem}[\cite{shao20182}]\label{strongold}
If $m$, $n \geq 48$, then $6 \leq \lambda(\overrightarrow{C_m} \boxtimes \overrightarrow{C_n}) \leq 8$.
\end{theorem}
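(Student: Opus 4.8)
The plan is to treat the two bounds separately. Throughout, write $G=\overrightarrow{C_m}\boxtimes\overrightarrow{C_n}$ with $V(G)=[m]\times[n]$; the out-neighbours of $(i,j)$ are $(i+1,j)$, $(i,j+1)$ and $(i+1,j+1)$ (indices mod $m$, mod $n$). A short computation shows that the vertices joined to $(i,j)$ by a directed path of length two are exactly those at the offsets $(2,0),(2,1),(0,2),(1,2),(2,2)$ and their negatives. Hence two vertices are constrained precisely when their coordinate difference $(\delta,\epsilon)$ satisfies $0\le\delta,\epsilon\le 2$ or $-2\le\delta,\epsilon\le 0$ (not both zero): an $\ell^\infty$-ball of radius two, restricted to the two monotone quadrants. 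In particular every row, every column and every main diagonal of $G$ restricts to an $L(2,1)$-labeling of a directed cycle, a fact I would use in both directions.

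For the upper bound I would exhibit an explicit periodic labeling. Trying the linear form $f(i,j)=(2i+3j)\bmod 9$, one checks the three edge-offsets $(1,0),(0,1),(1,1)$ give label differences $2,3,5$, each at cyclic distance $\ge 2$ from $0$ modulo $9$, while the five length-two offsets $(2,0),(0,2),(2,1),(1,2),(2,2)$ give $4,6,7,8,1$, all nonzero modulo $9$; so $f$ is a valid $8$-$L(2,1)$-labeling whenever it descends to the torus, i.e. when $9\mid m$ and $3\mid n$ (and, symmetrically, $(3i+2j)\bmod 9$ when $3\mid m$ and $9\mid n$). To reach all $m,n\ge 48$ I would then prove a gluing lemma analogous to Lemma \ref{multiple}: two diagonal (periodicity-\eqref{4period}-type) $8$-labelings of $\overrightarrow{C_m}\boxtimes\overrightarrow{C_n}$ and $\overrightarrow{C_p}\boxtimes\overrightarrow{C_n}$ can be spliced along a common row to label $\overrightarrow{C_{m+tp}}\boxtimes\overrightarrow{C_n}$, and combine this with Sylvester's theorem exactly as in Section 2; the threshold $48$ is what the Sylvester bound $(p-1)(q-1)$ yields for the base periods.

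The lower bound $\lambda(G)\ge 6$ is the real obstacle, and it cannot come from a single clique: the constraint graph is the comparability graph of the grid poset, so its cliques are chains of size at most $5$, and one checks directly that a $5$-clique such as the staircase $\{(0,0),(1,0),(1,1),(2,1),(2,2)\}$ can be labeled within $\{0,\dots,5\}$ (for instance by $4,2,0,5,3$). The argument must therefore be global. I would assume a labeling with labels in $\{0,\dots,5\}$ and exploit the rigidity of a $2\times 2$ block $\{(i,j),(i+1,j),(i,j+1),(i+1,j+1)\}$: the two diagonal corners each differ by $\ge 2$ from the two off-diagonal cells and from each other, leaving only the off-diagonal pair unconstrained. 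Listing the few label patterns a block admits inside $\{0,\dots,5\}$ and showing that they force an essentially periodic coloring of each row, column and diagonal, I would then derive a contradiction from the requirement that these forced patterns close up consistently around the large directed cycles, mirroring the way the Cartesian lower bound in Theorem \ref{large} fails to close up when $\gcd(m,n)\in\{1,2\}$.

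The hard part is precisely this last step. Unlike the Cartesian case, where the forbidden span is ruled out by the clean diagonal periodicity of Lemma \ref{diagonalcart}, here the block constraints admit several local patterns, so the case analysis showing that none of them tiles the torus with only six labels is the crux. As a sanity check, the linear form $(\alpha i+\beta j)\bmod 6$ never satisfies all nine offset constraints (the offsets $(2,1)$ and $(1,2)$ always force a collision), which both supports the bound and pinpoints where any tiling must break; promoting this from linear to arbitrary labelings, most likely through a finite (possibly computer-assisted) enumeration of admissible block patterns, is what I expect to demand the most work.
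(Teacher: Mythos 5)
First, a point of context: the paper never proves this statement at all --- Theorem~\ref{strongold} is quoted from \cite{shao20182} and used as a black box in the proof of Theorem~\ref{strong} --- so your attempt has to stand entirely on its own, and as written it has two genuine gaps. The fatal one is in the upper bound. Your only base constructions are the linear forms $(2i+3j)\bmod 9$ and $(3i+2j)\bmod 9$, which are indeed valid $8$-labelings but only descend to the torus when both cycle lengths are divisible by $3$ (one of them by $9$). Any splicing/Sylvester argument can only produce cycle lengths in the numerical semigroup generated by the base periods, and since all of your base periods are multiples of $3$, you can never reach lengths such as $m=49$ or $50$; the claim that ``the threshold $48$ is what the Sylvester bound yields'' is inconsistent with your own base cases, since $48$ is precisely one more than the Frobenius number of the \emph{coprime} pair $\{7,9\}$. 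What is actually needed are labelings of coprime periods, e.g.\ the diagonal patterns $0246135$ (period $7$, which appears in the paper's final remarks) and $024681357$ (period $9$), plus the mixed case $\overrightarrow{C_7}\boxtimes\overrightarrow{C_9}$ --- which cannot be diagonal at all, since $\gcd(7,9)=1$ would force a diagonal labeling to be constant --- and that mixed case is the genuinely nontrivial content of the bound in \cite{shao20182}. A further internal inconsistency: your splicing lemma is stated for \emph{diagonal} labelings, but your base labelings are not diagonal, since $(2i+3j)\bmod 9$ changes by $-1$ under $(i,j)\mapsto(i+1,j-1)$, so condition \eqref{4period} fails for them and your lemma would not even apply.

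The lower bound $\lambda\ge 6$ is planned, not proved. Your structural observations are correct and worth keeping: the constraint graph is as you describe, its cliques are chains of size at most $5$, and your labeling $4,2,0,5,3$ of the staircase shows the clique bound only gives $\lambda\ge 5$, so a global argument is unavoidable. But the global argument itself --- showing that the locally admissible $2\times 2$ block patterns cannot be completed consistently around two large directed cycles with only six labels --- is exactly the step you defer to ``a finite (possibly computer-assisted) enumeration.'' That enumeration is the entire content of the bound; the check that no linear labeling modulo $6$ works rules out one special family of colorings, not all of them. Until that case analysis is actually carried out (compare the fully worked-out analogue in the paper's Lemma~\ref{strongperiod}, which is precisely such an exhaustive local case analysis, there for $7$ labels), neither inequality of the theorem has been established.
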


In this section, we refine this theorem in the following way:

\begin{theorem}\label{strong}
If $m$, $n \geq 48$, then

\[ \lambda(\overrightarrow{C_m} \boxtimes \overrightarrow{C_n}) = 
  \begin{cases*}
    6, & if $m \equiv n \equiv 0 \pmod 7$; \\
    7 \text{ or } 8, & otherwise.\\
  \end{cases*}\]  

\end{theorem}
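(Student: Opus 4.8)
The plan is to prove the theorem in three pieces: the lower bound of $6$ in general (already supplied by Theorem~\ref{strongold}), the exact value $6$ in the divisibility-by-$7$ case, and the improvement of the lower bound to $7$ in every other case. Since the upper bound of $8$ is also inherited from Theorem~\ref{strongold}, the remaining work is to establish the constructions achieving $6$ when $7 \mid m$ and $7 \mid n$, and to rule out a $6$-labeling in all remaining congruence classes.

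First I would treat the construction giving $\lambda = 6$ when $m \equiv n \equiv 0 \pmod 7$. The natural strategy, mirroring the Cartesian case, is to find a periodic labeling of $\overrightarrow{C_7} \boxtimes \overrightarrow{C_7}$ with at most $6$ and then lift it via a Sylvester/Lemma-\ref{multiple}-type reduction specialized to the strong product. The key idea is to seek a \emph{diagonal} labeling of the form $f(i,j) = g(i + j \bmod 7)$ or, more likely here, a labeling built on a linear form $f(i,j) = h(\alpha i + \beta j \bmod 7)$ for suitable $\alpha,\beta$, so that the three edge-types of the strong product (horizontal, vertical, and diagonal) and the various directed paths of length two all translate into distance conditions on a single cyclic sequence of length $7$. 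One then checks that the labels $0,2,4,6,\dots$ spread across $\{0,\dots,6\}$ satisfy the $L(2,1)$ constraints: adjacent (difference $\geq 2$) and distance-two (difference $\geq 1$). The fact that $7$ colors $\{0,\dots,6\}$ on a length-$7$ cycle is exactly the tight regime is what forces the modulus $7$; I expect the construction to succeed precisely because $6 = \lambda$ requires $7$ available labels and the cycle length $7$ allows a clean cyclic arrangement. The extension from $\overrightarrow{C_7} \boxtimes \overrightarrow{C_7}$ to all $\overrightarrow{C_m} \boxtimes \overrightarrow{C_n}$ with $7 \mid m,n$ then follows by the same concatenation/blow-up argument used in Lemma~\ref{multiple}, applied to the strong product.

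The heart of the theorem, and the main obstacle, is the lower bound: showing $\lambda(\overrightarrow{C_m} \boxtimes \overrightarrow{C_n}) \geq 7$ whenever $(m,n) \not\equiv (0,0) \pmod 7$. The approach would parallel the Cartesian argument of Theorem~\ref{large}: assume for contradiction a $6$-$L(2,1)$-labeling $f$ exists, and first prove a rigidity/periodicity lemma (the strong-product analogue of Lemma~\ref{diagonalcart}) forcing every $6$-labeling to be diagonal, i.e.\ constant along lines $i + j \equiv \text{const}$ or some fixed linear direction. In the strong product each vertex $(i,j)$ has more neighbors and more distance-two mates than in the Cartesian case, so the local analysis on a $2\times 2$ or $3\times 3$ block should pin down the labels even more tightly; I would enumerate the possible color patterns on a small spanning block, use the fact that $6$ labels on the denser neighborhood structure leaves almost no freedom, and conclude the diagonal structure. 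Once rigidity is established, the labeling descends (exactly as in the final paragraph of the proof of Theorem~\ref{large}, by repeatedly deleting $n$ rows) to a $6$-labeling of some $\overrightarrow{C_k}_{+r} \boxtimes \overrightarrow{C_k}$ with $r \in \{1,\dots,6\}$ determined by the residues of $m,n$ modulo $7$; the diagonal periodicity then yields a chain $c(1,1) = c(2,k) = \cdots$ that eventually collides with an adjacency or a directed $2$-path, giving the contradiction. The genuinely hard part is the rigidity lemma: proving that \emph{every} $6$-labeling of the strong product is diagonal requires a careful and complete case analysis of the denser neighborhood, and getting the modulus exactly $7$ (rather than some other value) out of this analysis is where the bulk of the technical effort will lie.

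I would organize the write-up as a sequence of lemmas: (i) a diagonal-rigidity lemma for $6$-labelings of $\overrightarrow{C_m} \boxtimes \overrightarrow{C_n}$; (ii) a row-deletion lemma showing a $6$-labeling of $\overrightarrow{C_m} \boxtimes \overrightarrow{C_n}$ with $m \geq n+3$ induces one of $\overrightarrow{C_{m-n}} \boxtimes \overrightarrow{C_n}$; (iii) the explicit $6$-construction on $\overrightarrow{C_7} \boxtimes \overrightarrow{C_7}$ together with the concatenation lemma; and finally the assembly into Theorem~\ref{strong}, combining these with the bounds $6 \leq \lambda \leq 8$ of Theorem~\ref{strongold}. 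The ``$7$ or $8$'' verdict in the non-divisible case then records that the lower bound improves to $7$ but the exact value between $7$ and $8$ is left open.
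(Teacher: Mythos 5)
There is a genuine gap in your lower-bound argument, and it sits exactly where the modulus $7$ has to come from. Your plan is to establish diagonal rigidity, then delete rows to reach some $\overrightarrow{C_{k+r}} \boxtimes \overrightarrow{C_k}$, and finish by the diagonal-chain collision $c(1,1)=c(2,k)=\cdots$. But the row-deletion reduction is governed by $\gcd(m,n)$, not by the residues of $m,n$ modulo $7$: exactly as in the proof of Theorem \ref{large}, it is a Euclidean-algorithm process, and it terminates either in $\overrightarrow{C_{k+1}} \boxtimes \overrightarrow{C_k}$ or $\overrightarrow{C_{k+2}} \boxtimes \overrightarrow{C_k}$ (when $\gcd(m,n)\in\{1,2\}$), where your collision argument does apply, or in the square product $\overrightarrow{C_d} \boxtimes \overrightarrow{C_d}$ with $d=\gcd(m,n)\geq 3$, where it does not: a diagonal labeling is perfectly self-consistent on a square torus, and no adjacency or distance-two collision arises. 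Take $m=n=50$: no row-deletion is even possible, $7 \nmid 50$, and your proposal provides no mechanism whatsoever to rule out a $6$-labeling. Contrary to your expectation, the rigidity lemma cannot be the source of the ``7'' either; in the paper it (Lemma \ref{strongperiod}) is modulus-free and holds for all $m,n\geq 4$.

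The missing step, which is the heart of the paper's proof, is the analysis of what a \emph{diagonal} $6$-labeling of $\overrightarrow{C_d} \boxtimes \overrightarrow{C_d}$ can look like. Diagonality collapses such a labeling to a labeling of the cycle $C_d$ with colors $\{0,\dots,6\}$ in which vertices at distance at most two must receive colors at least $2$ apart, and vertices at distance three or four must receive distinct colors (an $L(2,2,1,1)$-labeling of $C_d$). A direct case analysis then shows that any such labeling is forced, up to symmetry, to repeat the block $2053164$ of length $7$ around the cycle, so it exists if and only if $7 \mid d$; combined with the reduction this yields $7\mid m$ and $7\mid n$. Without this cycle-labeling argument (or a substitute for it), your proof covers only the coprime-type cases and misses all $m,n$ with $\gcd(m,n)\geq 3$ not divisible by $7$. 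The construction half of your proposal is fine and matches the paper: the pattern $0246135$ placed along diagonals, $f(i,j)=g(i+j \bmod 7)$, works directly for all $m,n$ divisible by $7$, with no need for the Lemma \ref{multiple}-type concatenation you invoke.
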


The key lemma of in the proof of Theorem \ref{strong} is analogous to Lemma \ref{diagonalcart}:

\begin{lemma}\label{strongperiod}
Let $m$, $n \geq 4$ be integers. Any 6-$L(2,1)$-labeling $f$ of $\overrightarrow{C_m} \boxtimes \overrightarrow{C_n}$ is diagonal, i.e., the following condition holds:

\begin{equation}\label{6period}
    f(i,j)=f(i+1 \text{ mod $m$},j-1 \text { mod $n$}) \text{ for all } i \in [m], j \in [n].
\end{equation}
\end{lemma}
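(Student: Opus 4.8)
The plan is to mimic the strategy of Lemma~\ref{diagonalcart}, but now exploiting the richer neighbourhood structure of the strong product. In $\overrightarrow{C_m} \boxtimes \overrightarrow{C_n}$, a vertex $(i,j)$ has out-neighbours $(i{+}1,j)$, $(i,j{+}1)$ and $(i{+}1,j{+}1)$ (indices modulo $m,n$), and two vertices are constrained by the $L(2,1)$ condition whenever they are joined by an edge or by a directed path of length two. First I would fix attention on a small window of vertices around $(i,j)$ and $(i{+}1,j{-}1)$ — the two vertices whose labels we want to force equal — and enumerate exactly which pairs in this window are at directed distance $\le 2$, hence forced to differ by at least $1$ (and which by at least $2$). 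The goal is to identify a clique-like cluster under the distance-$\le 2$ relation that is large enough to nearly exhaust the palette $\{0,1,\dots,6\}$.

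Next I would argue by contradiction: suppose $f(i,j) \neq f(i{+}1,j{-}1)$. Following the template of the Cartesian proof, I expect that the vertices in the window must then all receive distinct labels, so a block of them uses up most of $\{0,\dots,6\}$. The diagonal edges of the strong product make the constraint graph on this window denser than in the Cartesian case, so the count of mutually-constrained vertices should be larger, which is precisely why a palette of size $7$ (labels $0$ through $6$) is tight enough to force the periodicity. I would then use the symmetry $f \mapsto 6 - f$ (a $6$-$L(2,1)$-labeling whenever $f$ is) to assume without loss of generality that $f(i,j)$ lies in the lower half of the palette, cutting the case analysis roughly in half, exactly as the $4-f$ trick was used before.

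The core of the argument will be a propagation step: once the window is forced into a small number of label patterns, I would look at one further vertex just outside the window — the analogue of the vertices $(3,3)$ or $(1,1)$ in Lemma~\ref{diagonalcart} — and show that in each surviving pattern this vertex is simultaneously adjacent to (or at directed distance two from) vertices carrying labels that leave no admissible value in $\{0,\dots,6\}$, producing the contradiction. By the vertex-transitivity of $\overrightarrow{C_m} \boxtimes \overrightarrow{C_n}$ it suffices to establish this for a single representative pair, say $(i,j)=(1,2)$ and its diagonal partner, and then \eqref{6period} follows for all $i,j$.

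The hard part will be controlling the combinatorial explosion of cases: because each vertex now has three out-neighbours and the distance-$\le 2$ relation reaches diagonally, the window contains more forced constraints and more candidate labelings to rule out than in the Cartesian setting, so the bookkeeping of which labels survive at each vertex is delicate. I anticipate that the main obstacle is choosing the window and the ordering of the forcing steps economically enough that only a handful of patterns need to be eliminated by hand, rather than exhausting all assignments; the $f \mapsto 6-f$ symmetry and a careful choice of which outside vertex to inspect should keep this manageable.
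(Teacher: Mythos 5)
Your high-level strategy is exactly the paper's: restrict to a small window around the pair $(i,j)$, $(i{+}1,j{-}1)$ (the paper uses a $\overrightarrow{P_4} \boxtimes \overrightarrow{P_4}$ subgraph), assume the two labels differ, normalize via $f \mapsto 6-f$ so that one of them lies in $\{0,1,2,3\}$, and eliminate each surviving configuration by propagating forced labels to vertices just outside the window. The problem is that what you have written is a plan, not a proof. For this lemma the case analysis \emph{is} the entire mathematical content: there is no structural shortcut, and "I would enumerate the cases and find a contradiction in each" leaves everything still to be verified. The paper's proof consists of four main cases (one per value of $f(v_{32})$), each splitting into up to six subcases by the value of $f(v_{23})$, and many subcases need a chain of forced values through two or three vertices outside the window (e.g. $f(v_{43})=5 \Rightarrow f(v_{44})=2 \Rightarrow f(v_{33})=0 \Rightarrow v_{34}!$), not a single inspection. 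Until those chains are exhibited and checked to close in every case, the lemma is unproven — and since diagonal 6-labelings genuinely exist when $7 \mid m,n$, the argument has no slack: every case must be killed by an honest forcing sequence that uses the assumption $f(v_{23}) \neq f(v_{32})$.

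Moreover, the one concrete structural claim you lean on fails in the strong-product setting. You expect, "following the template of the Cartesian proof," that the window vertices must all receive distinct labels and thereby nearly exhaust the palette. In the natural window $\{v_{22},v_{23},v_{32},v_{33}\}$ every pair except $\{v_{23},v_{32}\}$ is indeed joined by an edge (the diagonal edge makes $v_{22}v_{33}$ adjacent), so the labels are forced to be distinct — but four distinct labels out of seven exhaust nothing. Concretely, $f(v_{22})=0$, $f(v_{23})=2$, $f(v_{32})=4$, $f(v_{33})=6$ satisfies every constraint inside the window while $f(v_{23}) \neq f(v_{32})$; this configuration (equivalent under $6-f$ to the paper's Case 4 with $f(v_{23})=4$) is only refuted by examining the outside vertex $v_{21}$. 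More generally, the distance-$\le 2$ constraint relation in $\overrightarrow{C_m} \boxtimes \overrightarrow{C_n}$ contains no set of mutually constrained vertices large and tight enough to force a contradiction by itself, so unlike Lemma \ref{diagonalcart} the contradiction here is irreducibly non-local. The missing work is therefore not routine bookkeeping you may defer: the shape of argument you predict (window forces a handful of patterns, one extra vertex kills each) underestimates the proof, and filling the gap means producing essentially the full case tree that the paper writes out.
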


\begin{proof}[Proof of Lemma \ref{strongperiod}]

Let $G$ be the graph $\overrightarrow{C_m} \boxtimes \overrightarrow{C_n}$. For every vertex $(i,j)$ of $G$, there is a $\overrightarrow{P_4} \boxtimes \overrightarrow{P_4}$ subgraph of $G$ as in Figure \ref{figstrong} such that $v_{23}$ is the vertex $(i,j)$. It suffices, then, to show that $f(v_{23}) = f(v_{32})$ for every 6-$L(2,1)$-labeling of the graph in Figure \ref{figstrong}.

\medskip

Let $f$ be a 6-$L(2,1)$-labeling of $G$. By the fact that $6-f$ is also a 6-$L(2,1)$-labeling of $G$, we may assume that $f(v_{32}) \in \{0,1,2,3\}$. We will divide the rest of the proof in cases according to the value of $f(v_{32})$. In each case, we will assume that $f(v_{23}) \neq f(v_{32})$ and reach a contradiction by applying the rules of $L(2,1)$-labeling and finding a vertex for which there is no available color. We will use the notation $v!$ to mean that there is no color available for the vertex $v$, and the notation $\{u,v\} \in S!$ to mean that $u$ and $v$ cannot be colored using the colors in $S$, where $S$ is the set of possible colors for $u$ and $v$ based on the colors of the previous vertices and the rules of $L(2,1)$-labeling.  For instance, if $u$ and $v$ are joined by an edge, then $\{u,v\} \in \{0,1\}!$.

\subsubsection*{Case 1: $f(v_{32}) = 3$}

\begin{itemize}

    \item $f(v_{23}) \in \{5,6\} \Rightarrow  \{v_{22},v_{33}\} \in \{0,1\}!$.
    
    \item $f(v_{23}) = 4 \Rightarrow f(v_{22}) = 6$ and $f(v_{33}) \in \{0,1\}$, or $f(v_{22}) \in \{0,1\}$ and $f(v_{33}) = 6$. In the first case, $f(v_{43}) = 5 \Rightarrow f(v_{44}) = 2 \Rightarrow f(v_{33}) =0 \Rightarrow v_{34}!$; in the second, $f(v_{21}) = 5 \Rightarrow f(v_{11}) = 2 \Rightarrow f(v_{22}) =0 \Rightarrow v_{12}!$.
    
    \item $f(v_{23}) \in \{0,1\} \Rightarrow \{v_{12},v_{23}\} \in \{5,6\}!$.
    
    \item $f(v_{23}) = 2 \Rightarrow f(v_{22}) = 0$ and $f(v_{33}) \in \{5,6\}$, or $f(v_{33}) = 0$ and $f(v_{22}) \in \{5,6\}$. In the first case,  $\Rightarrow f(v_{43}) = 1 \Rightarrow f(v_{44}) = 4 \Rightarrow f(v_{33}) = 6 \Rightarrow v_{34}!$; in the second, $\Rightarrow f(v_{21}) = 1 \Rightarrow f(v_{11}) = 4 \Rightarrow f(v_{22}) = 6 \Rightarrow v_{12}!$.
    
\end{itemize}

\subsubsection*{Case 2: $f(v_{32}) = 0$}

\begin{itemize}

    \item $f(v_{23}) = 3 \Rightarrow \{v_{22},v_{33}\} \in \{5,6\}! $.
    
    \item $f(v_{23}) = 5 \Rightarrow \{v_{22},v_{33}\} \in \{2,3\}! $.
    
    \item $f(v_{23}) = 2 \Rightarrow f(v_{22}) = 6$ and $f(v_{33}) = 4$, or $f(v_{33}) = 6$ and $f(v_{22}) = 4$. In the first case, $ v_{43}! $; in the second, $v_{21}!$.
    
    \item $f(v_{23}) = 4 \Rightarrow f(v_{22}) = 6$ and $f(v_{33}) = 2$, or $f(v_{22}) = 2$ and $f(v_{33}) = 6$. In the first case, $v_{34}! $; in the second, $v_{12}!$.
    
    \item $f(v_{23}) = 6 \Rightarrow f(v_{22}) = 2$ and $f(v_{33}) = 4$, or $f(v_{22}) = 4$ and $f(v_{33}) = 2$. In the first case, $ v_{43}! $; in the second, $v_{21}!$.
    
    \item $f(v_{23}) = 1 \Rightarrow  f(v_{22}) = 3$ and $f(v_{33}) = 5$, or $f(v_{22}) = 5$ and $f(v_{33}) = 3$,  or $f(v_{22}) = 6$ and $f(v_{33}) = 4$, or $f(v_{22}) = 4$ and $f(v_{33}) = 6$,  or $f(v_{22}) = 6$ and $f(v_{33}) = 3$, or $f(v_{22}) = 3$ and $f(v_{33}) = 6$. In the first and the third cases, $v_{34}!$; in the second and the fourth, $v_{12}!$; in the fifth, $f(v_{34})=5 \Rightarrow v_{44}!$; in the sixth, $f(v_{12})=5 \Rightarrow v_{11}!$.
    
\end{itemize}

\subsubsection*{Case 3: $f(v_{32}) = 1$}

\begin{itemize}

    \item $f(v_{23}) = 3$ or $f(v_{23}) = 0$: can be treated similarly as the previous cases $f(v_{32}) = 3, f(v_{23}) = 1$, and $f(v_{32}) = 0, f(v_{23}) = 1$.

    \item $f(v_{23}) = 6 \Rightarrow \{v_{22},v_{33}\} \in \{3,4\}!$.
    
    \item $f(v_{23}) = 5 \Rightarrow \{v_{22},v_{33}\} \in \{3\}!$.
    
    \item $f(v_{23}) = 4 \Rightarrow \{v_{22},v_{33}\} \in \{6\}!$.
    
    \item $f(v_{23}) = 2 \Rightarrow f(v_{22}) = 4$ and $f(v_{33}) = 6$, or $f(v_{22}) = 6$ and $f(v_{33}) = 4$. In the first case, $f(v_{43}) = 3$ and $f(v_{34}) = 0 \Rightarrow v_{44}!$; in the second, $f(v_{21}) = 3$ and $f(v_{12}) = 0 \Rightarrow v_{11}!$.
    
\end{itemize}

\subsubsection*{Case 4: $f(v_{32}) = 2$}

\begin{itemize}

    \item $f(v_{23}) \in \{0,1,3\}$: can be treated similarly as in previous cases with $f(v_{23})$ and $f(v_{32})$ swapped.

    \item $f(v_{23}) = 5 \Rightarrow \{v_{22},v_{33}\} \in \{0\}!$.
    
    \item $f(v_{23}) = 6 \Rightarrow f(v_{22}) = 0$ and $f(v_{33}) = 4$, or $f(v_{22}) = 4$ and $f(v_{33}) = 0$. In the first case, $v_{43}!$; in the second, $v_{21}!$.
    
    \item $f(v_{23}) = 4 \Rightarrow f(v_{22}) = 0$ and $f(v_{33}) = 6$, or $f(v_{22}) = 6$ and $f(v_{33}) = 0$. In the first case,  $v_{43}!$; in the second, $v_{21}!$.
    
\end{itemize}

\end{proof}

\begin{figure}
    \centering
    
    \includegraphics[width=6.75cm, height=6cm]{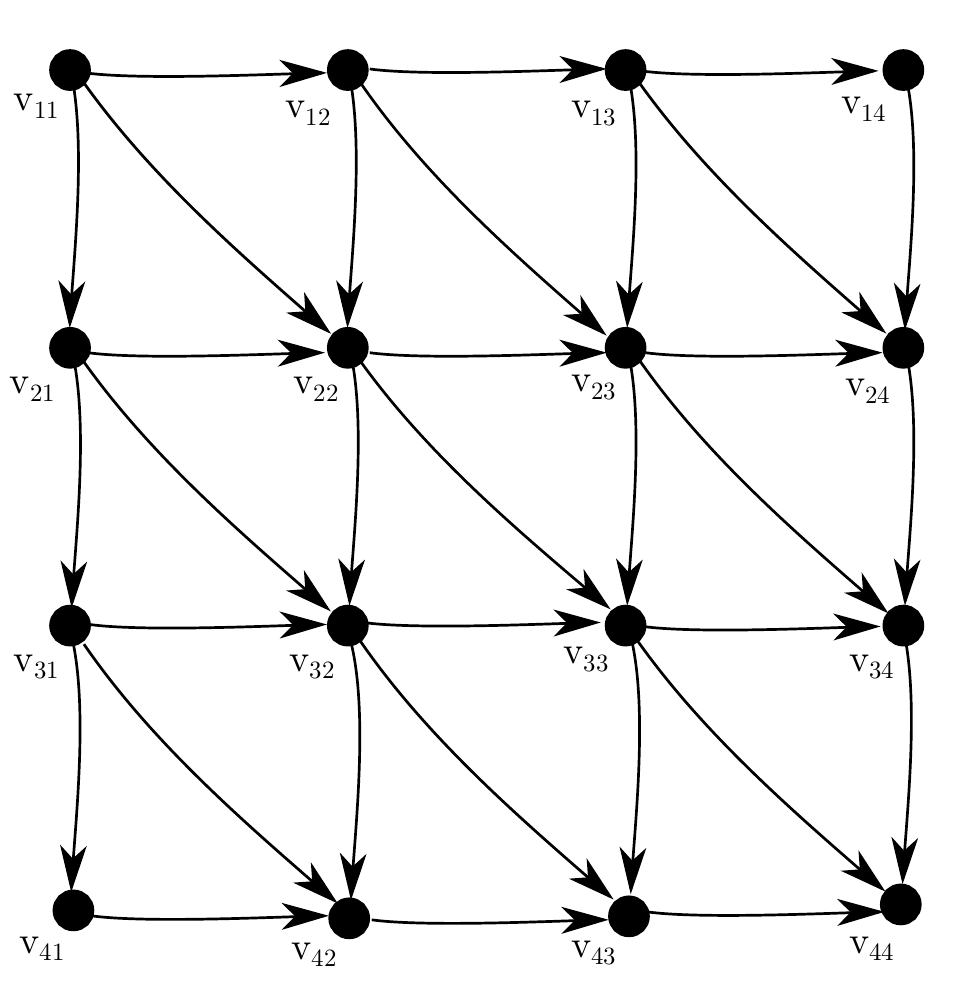}
    
    \caption{A $\protect\overrightarrow{P_4} \boxtimes \protect\overrightarrow{P_4}$ subgraph of $\protect\overrightarrow{C_m} \boxtimes \protect\overrightarrow{C_n}$}
    \label{figstrong}
    
\end{figure}

\begin{proof}[Proof of Theorem \ref{strong}]

By Theorem \ref{strongold}, it is enough to prove that $G = \lambda(\overrightarrow{C_m} \boxtimes \overrightarrow{C_n}) = 6$ if and only if 7 divides $m$ and $n$. 

\medskip

If both $m$ and $n$ are divisible by 7, the following periodic labeling is easily checked to be a $L(2,1)$-labeling of $\overrightarrow{C_m} \boxtimes \overrightarrow{C_n}$: the pattern 0246135 is repeated along the cycles. More explicitly, $f(i,j) = 0, 2, 4, 6, 1, 3, 5$ if $i + j \equiv 2, 3, 4, 5, 6, 0, 1 \pmod 7$, respectively.

\medskip

On the other hand, assume that $G$ admits a 6-$L(2,1)$-labeling $f$. By Lemma \ref{strongperiod}, $f$ is diagonal. Similarly as in the proof of Theorem \ref{large}, it is simple to check that, if $m \geq n + 3$, then $f$ induces a 6-$L(2,1)$-labeling of $\overrightarrow{C_m}_{-n} \boxtimes \overrightarrow{C_n}$, simply by considering the restriction of the coloring in a $\overrightarrow{C_m}_{-n} \boxtimes \overrightarrow{C_n}$ subgraph of $\overrightarrow{C_m} \boxtimes \overrightarrow{C_n}$.  Again, applying this argument consecutively, we are either left with a $\overrightarrow{C_d} \boxtimes \overrightarrow{C_d}$, where $d = \gcd(m,n)$, if $d \geq 3$, or a $\overrightarrow{C_{k+1}} \boxtimes \overrightarrow{C_k}$, or a $\overrightarrow{C_{k+2}} \boxtimes \overrightarrow{C_k}$.

\medskip

In the last two cases, the fact that $f$ is diagonal immediately implies that there are either two consecutive vertices or two vertices within distance two which receive the same color, which is a contradiction.

\medskip

We are done, then, if we prove that $\lambda({\overrightarrow{C_d} \boxtimes \overrightarrow{C_d}}) \geq 7$ if $d$ is not a multiple of 7. Indeed, assume that there is a 6-$L(2,1)$-labeling of $\overrightarrow{C_d} \boxtimes \overrightarrow{C_d}$. Again, by Lemma \ref{strongperiod}, it should be diagonal. In particular, it means that it corresponds to a labeling of the cycle $C_d$ with the following property: every pair of vertices with distance at most two must receive colors two apart, and every pair of vertices with distance three or four must receive distinct colors: indeed, the vertex $(i+1,j)$ is a neighbor of $(i,j)$, and the vertex $(i+2,j)$ has the same color as $(i+1,j+1)$, which is adjacent to $(i,j)$ in $\overrightarrow{C_d} \boxtimes \overrightarrow{C_d}$, so they must receive colors two apart from the color of $(i,j)$; similarly we prove that $(i+3,j)$ and $(i+4,j)$ must receive distinct colors from $(i,j)$. Such a coloring is denoted in the literature by $L(2,2,1,1)$-labeling. Note that, in particular, this implies the statement for $d = 3$ and $d = 4$. Let us assume in what follows that $d \geq 5$.

\medskip

If $C_d$ has such a coloring, it is readily checked that it must use the color 0 or 6. Indeed, otherwise all available colors are 12345, and it is impossible to color a $P_5$ subgraph of $C_d$ with only these colors.

\medskip

By symmetry, we may assume that 0 is used. Let $c$ be the coloring of $C_d$, and let the integers modulo $d$ represent its vertices. Let us assume that $c(0) = 0$. 

\medskip

If $c(1) = 3$, then 2 must receive a color from $\{5,6\}$. If $c(2) = 5$, then $c(3) = 1$, and then $c(-1) = 6$, $c(-2) = 2$ or $c(-2)=4$. In the first case, $c(-3) = 4$, and finally there is no available color for $-4$; in the second, $c(-3) \in \{1,2\}$, and in either case there is no color available for -4.
If $c(2) = 6$, then $c(3) = 1$, and then $c(-1) = 5$, $c(-2) = 2$, and there is no available color for -3.

\medskip

If $c(1) = 4$, then $c(2)$ is either 2 or 6. In the first case, $c(-1) = 6$ and there is no available color for 3. In the second, $c(-1) = 2$, which implies $c(-2) = 5$ and then there is no available color for -3.

\medskip

If $c(1) = 6$, then $c(2) \in \{2,3,4\}$. The first implies $c(3) = 4$ and there is no color for 4. The second implies $c(3) = 1$, and then $c(4) = 5$ and there is no color for 5. Finally, the third implies that either $c(3) = 1$ or $c(3) = 2$, both of which makes impossible to find a color for 4.

\medskip

The argument above implies that the neighbors of 0 must have colors 2 and 5. Without loss of generality, we may assume that $c(1) = 5$ and $c(-1) = 2$. This implies that $c(2) = 3$, which in turn implies $c(3) = 1$, then $c(4) = 6$ and $c(5) = 4$. It follows that $c(6) \in \{0,2\}$, but by the paragraphs above, $4$ cannot be a neighbor of $0$, so $c(6) = 2$. Then $c(7) = 0$, and the block 2053164 of size 7 is repeated. The only way the coloring can be completed along the cycle is, then, if 7 divides $d$.

\end{proof}

\section{Final remarks}

The natural next step would be to close the gap left from Theorem \ref{strong}, deciding for which $m$ and $n$ we have $\lambda({\overrightarrow{C_m} \boxtimes \overrightarrow{C_n}}) = 7$. 

\medskip

In the proof of Theorem \ref{strong}, we gave a periodic 6-labeling of $\lambda({\overrightarrow{C_7} \boxtimes \overrightarrow{C_7}})$, namely that one in which the pattern 0246135 is repeated along the cycles diagonally. In a similar fashion, the following periodic 7-coloring works for $\lambda({\overrightarrow{C_8} \boxtimes \overrightarrow{C_8}})$: 02461357. Concatenating these two patterns, one can show that $\lambda({\overrightarrow{C_m} \boxtimes \overrightarrow{C_m}}) = 7$ for every sufficiently large $m$ (namely, for every $m \in S(7,8)$; in particular for $m \geq 42$), and consequently $\lambda({\overrightarrow{C_m} \boxtimes \overrightarrow{C_n}}) = 7$ for every $m$, $n$ such that 7 does not divide both $m$ and $n$ and $\gcd(m,n) \geq 42$.

\medskip

Finally, we remark that it is simple to check that the proof of Lemma \ref{multiple} works in the setting of strong product of cycles as well. As we know from the paragraph above that $\lambda({\overrightarrow{C_m} \boxtimes \overrightarrow{C_m}}) = 7$ for every $m$ in $S(7,8)$ (and, in particular
for every $m \geq 42$), to prove that $\lambda({\overrightarrow{C_m} \boxtimes \overrightarrow{C_n}}) = 7$ for all sufficiently large $m$ and $n$, it would be enough to find a pair of coprime integers $a$, $b \in S(7,8)$ such that $\lambda({\overrightarrow{C_a} \boxtimes \overrightarrow{C_b}}) = 7$.

\bibliographystyle{acm}
\bibliography{ref}

\end{document}